\documentclass{IEEEtran}
\usepackage{cite}
\usepackage{amsmath,amssymb,amsfonts}
\usepackage{algorithmic}
\usepackage{graphicx}
\usepackage{textcomp}
\def\BibTeX{{\rm B\kern-.05em{\sc i\kern-.025em b}\kern-.08em
    T\kern-.1667em\lower.7ex\hbox{E}\kern-.125emX}}

\usepackage{amsthm}

\newtheorem{thm}{Theorem}
\newcommand{\dct}{\bigtriangleup_\theta}
\newcommand{\ds}{\bigtriangleup_s}
\newcommand{\curl}[1]{(\nabla\times)^{#1}}

\begin{document}
\title{A Framework for Arbitrary-Order Electromagnetic Field Approximation Based on Analytical Solutions of Maxwell’s Equations}
\author{David Wei Ge
\thanks{Dr. David Wei Ge was a software engineer, retired from Microsoft Corporation, Redmond, USA. He is an independent researcher (10218 125th AVE NE, Kirkland, WA 98033 USA. e-mail:gexiaobao.usa@gmail.com)}}

\maketitle

\begin{abstract}
The standard Yee finite-difference time-domain (FDTD) algorithm uses Maxwell's equations to replace first-order temporal derivatives with spatial curls. In this work, the relation is extended to arbitrary orders: a Time--Space Theorem establishes the correspondence between higher-order temporal derivatives and successively higher-order spatial curl operators. Based on this relation, an arbitrary-order framework for electromagnetic field approximations is developed. The framework constructs higher temporal orders through higher curl orders and permits different spatial discretizations and curl-estimation techniques, thereby providing a general basis for developing numerical algorithms of arbitrary orders. As one implementation, an arbitrary-order FDTD algorithm is constructed using staggered time and space stepping together with recursive curl estimations. Numerical simulations of an initial-value problem are performed for temporal approximation orders from second to tenth order while the spatial and temporal step sizes are kept fixed. Comparisons with the analytical solution show that the errors decrease rapidly and monotonically with increasing approximation order. These results demonstrate the practical potential of the framework for high-accuracy electromagnetic computation and provide a basis for further development of arbitrary-order numerical methods.
\end{abstract}

\begin{IEEEkeywords}
	Analytical solutions of Maxwell’s equations,
	Benchmark numerical solvers,
	Finite-difference time-domain (FDTD) method,
	High-order/closed-form solution methods,
	Time-domain computational electromagnetics,
	Wave propagation and dispersion
\end{IEEEkeywords}

\section{Introduction}
\label{sec:introduction}

\IEEEPARstart{F}{inite}-difference time-domain (FDTD) methods, originating from the seminal work of Kane Yee \cite{fdtd:Yee}, remain a central tool in computational electromagnetics because of their simplicity, locality, and explicit time-stepping structure. However, the standard second-order Yee scheme is known to suffer from numerical dispersion, grid anisotropy, and accuracy limitations that become pronounced in long-time or high-frequency simulations. Although reducing the spatial and temporal step sizes can mitigate these effects, such refinement rapidly increases computational cost and, in practice, may yield diminishing improvements \cite{fdtd.cheng.2011}, \cite{GE20266756962} once the discretization becomes sufficiently fine. 

These considerations have motivated the development of higher-order FDTD schemes aimed at improving accuracy without excessive grid refinement; a systematic treatment of higher-order FDTD methods is given in \cite{fdtd.kantartzis.2006}. In many such schemes, the temporal order remains second while the spatial order is increased, commonly to fourth or sixth order   \cite{fdtd.MZhao.2019}, \cite{fdtd.Konstantinos.2004}, \cite{fdtd.bokil.2012}. Higher temporal order has also been investigated. Fourth-order temporal FDTD formulations were introduced by Fang \cite{fdtd.fang1989} and subsequently developed in later work such as Xie et al. \cite{fdtd.xie2002}. The scheme in \cite{fdtd.xie2002} employs four time levels of field history and therefore uses a (2,4) scheme during the initial three time steps before the required history becomes available.

To evaluate the benefits of higher-order schemes, numerical dispersion and the associated phase and group velocities are commonly used as accuracy measures \cite{fdtd.BLINNE2018273},\cite{fdtd.cheng.2011}, \cite{fdtd.gian.2019}. Recently developed analytical solutions of Maxwell’s equations \cite{GE2026100688}, \cite{GE20266756962} provide direct reference fields against which numerical approximations can also be evaluated.

More importantly, the analytical formulation presented in \cite{GE20266756962} can itself be used to construct a general framework for arbitrary-order electromagnetic field approximation. This framework, which is the focus of the present paper, has a structure analogous to the Taylor series:

\begin{itemize}
\item A Taylor series employs an infinite expansion in derivatives, whereas the framework employs an infinite expansion in curl operators.
\item A Taylor series provides a self-mapping of an analytic function, whereas the framework provides a mapping from initial conditions and source terms to electromagnetic fields.
\item Finite truncation of a Taylor series provide arbitrary-order approximations to an analytic function, whereas finite-order realizations of the framework provide arbitrary-order numerical approximations to the analytical solution of Maxwell’s equations.
\end{itemize}

By employing different mesh geometries, derivative orders, and estimation techniques, the framework can generate different numerical schemes. Beyond providing a unified representation of existing schemes, it also offers a systematic approach to constructing new approximation algorithms of arbitrary order.

The order of an FDTD scheme is conventionally denoted by \((N,M)\), where \(N\) and \(M\) represent the temporal and spatial orders, respectively. While the present framework accommodates this conventional interpretation, its operator-based formulation also suggests alternative meanings of temporal and spatial order, as outlined below.
\begin{itemize}

\item \textit{Temporal order.} In conventional higher-order time schemes, increased formal accuracy can be obtained by combining field approximations from multiple time levels \cite{fdtd.fang1989}, \cite{fdtd.xie2002}, \cite{fdtd:hwang}. Such temporal accuracy order is therefore associated with temporal discretization: field values at different discrete times are used to construct the time evolution. In the present framework, higher temporal order is constructed differently. Successively higher-order time derivatives are explicitly incorporated at the same time level, without requiring additional temporal discretization for these orders. The Time--Space Theorem \cite{GE20266756962} establishes a direct correspondence between time derivatives of successive orders and successively higher-order spatial curl operators. Consequently, increasing the temporal order is achieved by incorporating higher curl orders using spatial information, rather than additional sets of field values at different time levels. Thus, while both approaches characterize approximation accuracy by the order of the leading discarded term, they differ in how higher temporal accuracy is constructed: through temporal discretization in the former and through successively higher temporal derivatives, represented by higher curl operators, in the present framework. This distinction is central to the framework.
	
\item \textit{Spatial order.} Higher spatial order conventionally refers to higher-order approximations of the first-order spatial derivatives appearing in the curl operator \cite{fdtd.cheng.2011}, \cite{fdtd.review.2023}, generally through wider spatial stencils. In the present framework, spatial order has a different interpretation: it refers to the order of the curl operator itself. Wider spatial stencils contain information associated with higher-order spatial derivatives, allowing successively higher-order curl operators to be estimated. The Time–Space Theorem \cite{GE20266756962} establishes the correspondence between these higher curl orders and higher temporal-derivative orders, thereby providing a systematic means of incorporating the spatial information into time evolution.

\item \textit{Recursive approximation.} Unlike algorithms formulated for a fixed order, under the operator-based definitions of order described above, the framework has a recursive structure. Successively higher-order approximations are obtained by repeated application of the same computational procedure, without requiring a separate formulation or implementation for each order. The approximation order is therefore determined simply by the number of recursive iterations, which can be increased according to the desired accuracy.

\item \textit{Courant-number structure.} For conventional higher-order FDTD formulations, the stability constraints and associated Courant limits generally depend on the particular spatial and temporal discretizations \cite{fdtd.bokil.2012}. Within this operator-based framework, the recursive construction preserves the same Courant-number structure as the approximation order increases. Thus, successive higher-order approximations can be constructed within a consistent Courant-number formulation, providing a systematic basis for analyzing the corresponding CFL condition at arbitrary orders.

\end{itemize}

The remainder of this paper is organized as follows. Section \ref{sec:framework} develops the framework formulas. As one application of the framework, Section \ref{sec:aoFDTD} develops an arbitrary-order FDTD algorithm. Section \ref{sec:exp.ini} presents numerical results for an initial-value problem, demonstrating that the field approximations rapidly approach the analytical solution as the FDTD order increases. Finally, Section \ref{sec:conclusion} concludes the paper and discusses other potential applications of the framework.

\section{Framework Formulas}
\label{sec:framework}

Unless otherwise explicitly specified, the following notation is used throughout the paper. Capital letters denote three-dimensional vector functions, whereas lowercase letters denote scalars. Bold capital letters denote matrices, and bold lowercase letters denote vectors. Matrix elements are indexed from zero. For compactness, a matrix may be defined by listing its element indices as arguments and specifying the elements by a formula enclosed in square brackets. For example,
\begin{equation*}
	\boldsymbol{B}(i,j)_{r\times c}
	=
	[(i+1)^{j+1}]_{r\times c},
\end{equation*}
defines an \(r\times c\) matrix \(\boldsymbol{B}\), where \(i\) and \(j\) denote the row and column indices, respectively, and
\[
i=0,1,\ldots,r-1,
\qquad
j=0,1,\ldots,c-1.
\]
Thus, the element in row \(i\) and column \(j\) is defined by \((i+1)^{j+1}\).

For a vector field \(F:\mathbb{R}^3\rightarrow\mathbb{R}^3\), the repeated curl operator is defined recursively by
\begin{equation*}
	(\nabla\times)^0F=F,
\end{equation*}
and
\begin{equation*}
	(\nabla\times)^nF
	=
	\nabla\times\left((\nabla\times)^{n-1}F\right),
	\qquad n\geq1.
\end{equation*}

Consider the following three-dimensional Maxwell’s equations, transformed from their SI form using the scaled time coordinate \(\theta=ct\) \cite{maxn:Kohlmann,fdtd:meep}:
\begin{align}
	\label{3d.1}
	\frac{\partial H(x,y,z,\theta)}{\partial \theta}
	&= -\frac{1}{\eta}\nabla\times E(x,y,z,\theta),
	\\
	\label{3d.2}
	\frac{\partial E(x,y,z,\theta)}{\partial \theta}
	&= \eta\nabla\times H(x,y,z,\theta)
	-\eta J(x,y,z,\theta),
	\\
	\label{3d.3}
	H(x,y,z,0) &= F_h(x,y,z),
	\\
	\label{3d.4}
	E(x,y,z,0) &= F_e(x,y,z),
\end{align}
where
\begin{equation*}
	\eta=\sqrt{\frac{\mu}{\epsilon}},
	\qquad
	\theta=ct,
	\qquad
	c=\frac{1}{\sqrt{\epsilon\mu}},
\end{equation*}
with
\begin{equation*}
	x,y,z,t\in\mathbb{R},
	\qquad
	\epsilon,\mu\in\mathbb{R}_{>0}.
\end{equation*}
Here, \(\epsilon\) and \(\mu\) are constants, \(x,y,z\) are spatial coordinates, \(t\) is time, \(J(x,y,z,\theta)\) is the source function, and \(F_h(x,y,z)\) and \(F_e(x,y,z)\) specify the initial fields. The scaled time coordinate \(\theta=ct\), which has units of length, is used throughout the formulation; for convenience, \(\theta\) is hereafter referred to as time.

The initial fields are assumed to satisfy the following divergence constraints.
\begin{equation}
	\label{3d.5}
	\nabla\cdot F_h(x,y,z)=0
\end{equation}
\begin{equation}
	\label{3d.6}
	\nabla\cdot F_e(x,y,z)=0
\end{equation}
For notational simplicity, spatial arguments may be omitted when no ambiguity arises; for example,
\[
F(x,y,z,\theta)=F(\theta).
\]

A function pair \(\{H(x,y,z,\theta),E(x,y,z,\theta)\}\) is a solution to the problem if and only if it satisfies (\ref{3d.1})--(\ref{3d.4}). To construct approximations to the solution, we begin by separating the temporal Taylor expansions of the fields into even- and odd-order terms:
\begin{equation}
	\label{taylor.H}
	\begin{split}
		H(\theta+\dct)=H(\theta)
		&+\sum_{n=0}^{\infty}
		\frac{\partial^{2(n+1)}H(\theta)}
		{\partial\theta^{2(n+1)}}
		\frac{\dct^{2(n+1)}}{(2(n+1))!}
		\\
		&+\sum_{n=0}^{\infty}
		\frac{\partial^{2n+1}H(\theta)}
		{\partial\theta^{2n+1}}
		\frac{\dct^{2n+1}}{(2n+1)!},
	\end{split}
\end{equation}
and
\begin{equation}
	\label{taylor.E}
	\begin{split}
		E(\theta+\dct)=E(\theta)
		&+\sum_{n=0}^{\infty}
		\frac{\partial^{2(n+1)}E(\theta)}
		{\partial\theta^{2(n+1)}}
		\frac{\dct^{2(n+1)}}{(2(n+1))!}
		\\
		&+\sum_{n=0}^{\infty}
		\frac{\partial^{2n+1}E(\theta)}
		{\partial\theta^{2n+1}}
		\frac{\dct^{2n+1}}{(2n+1)!}.
	\end{split}
\end{equation}
The standard Yee-FDTD formulation uses Maxwell's equations to replace the first-order temporal derivatives by spatial curls and truncates the temporal expansion accordingly. In contrast, the present framework retains temporal derivatives of arbitrary order and uses the following Time--Space Theorem to express them systematically in terms of successively higher-order spatial curl operators.

\begin{thm}[Time--Space Theorem]
	\label{timespacetheorem}
	Let
	\[
	H(x,y,z,\theta),E(x,y,z,\theta)
	\in C^{\infty}(\mathbb{R}^4,\mathbb{R}^3)
	\]
	satisfy Maxwell's equations (\ref{3d.1}) and (\ref{3d.2}). Then, for any integer \(n\geq0\),
	\begin{align}
		\label{ts.h1}
		\frac{\partial^{2n+1}H}{\partial\theta^{2n+1}}
		&=
		\frac{1}{\eta}(-1)^{n+1}
		\curl{2n+1}E
		+J_{h,2n+1},
		\\
		\label{ts.h2}
		\frac{\partial^{2n+2}H}{\partial\theta^{2n+2}}
		&=
		(-1)^{n+1}
		\curl{2n+2}H
		+J_{h,2n+2},
		\\
		\label{ts.e1}
		\frac{\partial^{2n+1}E}{\partial\theta^{2n+1}}
		&=
		\eta(-1)^n
		\curl{2n+1}H
		+\eta J_{e,2n+1},
		\\
		\label{ts.e2}
		\frac{\partial^{2n+2}E}{\partial\theta^{2n+2}}
		&=
		(-1)^{n+1}
		\curl{2n+2}E
		+\eta J_{e,2n+2},
	\end{align}
	where the source terms are defined by
	\begin{align*}
		J_{h,1}&=0,\\
		J_{h,2n+1}
		&=
		\sum_{m=1}^{n}
		(-1)^{m+1}
		\curl{2m-1}
		\frac{\partial^{2(n-m)+1}J(\theta)}
		{\partial\theta^{2(n-m)+1}},
		 n>0,\\
		J_{h,0}&=0,\\
		J_{h,2(n+1)}
		&=
		\sum_{m=0}^{n}
		(-1)^m
		\curl{2m+1}
		\frac{\partial^{2(n-m)}J(\theta)}
		{\partial\theta^{2(n-m)}},\\
		J_{e,2n+1}
		&=
		\sum_{m=0}^{n}
		(-1)^{m+1}
		\curl{2m}
		\frac{\partial^{2(n-m)}J(\theta)}
		{\partial\theta^{2(n-m)}},\\
		J_{e,0}&=0,\\
		J_{e,2(n+1)}
		&=
		\sum_{m=0}^{n}
		(-1)^{m+1}
		\curl{2m}
		\frac{\partial^{2(n-m)+1}J(\theta)}
		{\partial\theta^{2(n-m)+1}}.
	\end{align*}
\end{thm}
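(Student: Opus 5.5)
The plan is to prove all four identities (\ref{ts.h1})--(\ref{ts.e2}) together by induction on the derivative order $k=1,2,3,\dots$. At each step I differentiate the identity already established for order $k$ once more in $\theta$, and then use (\ref{3d.1}) or (\ref{3d.2}) to replace the single new time derivative of a field by a curl.

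The one analytic ingredient is that $\partial_\theta$ commutes with $(\nabla\times)^m$ on $C^\infty$ fields. This holds because each component of $(\nabla\times)^m F$ is a finite linear combination of mixed spatial partial derivatives of the components of $F$, and mixed partials commute for $C^\infty$ functions (Schwarz). The same fact lets me write $\partial_\theta\big((\nabla\times)^m \partial_\theta^j J\big)=(\nabla\times)^m\partial_\theta^{j+1}J$, so the source terms can be differentiated term by term. Note that the divergence constraints (\ref{3d.5})--(\ref{3d.6}) are not needed; only the curl equations enter.

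\emph{Base case} ($n=0$). Equation (\ref{ts.h1}) with $n=0$ is exactly (\ref{3d.1}), since $J_{h,1}=0$. Equation (\ref{ts.e1}) with $n=0$ is (\ref{3d.2}), since $J_{e,1}=(-1)^{1}(\nabla\times)^0 J=-J$.

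\emph{Step from odd order $2n+1$ to even order $2n+2$.} Differentiating (\ref{ts.h1}) and substituting (\ref{3d.2}) for $\partial_\theta E$ gives
\begin{equation*}
\frac{\partial^{2n+2}H}{\partial\theta^{2n+2}}=(-1)^{n+1}\curl{2n+2}H+(-1)^{n}\curl{2n+1}J+\frac{\partial J_{h,2n+1}}{\partial\theta}.
\end{equation*}
So it suffices to verify the recursion
\begin{equation*}
J_{h,2n+2}=(-1)^n\curl{2n+1}J+\partial_\theta J_{h,2n+1}.
\end{equation*}
In the defining sum for $J_{h,2(n+1)}$, the $m=n$ term is $(-1)^n\curl{2n+1}J$. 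The remaining terms $m=0,\dots,n-1$ coincide with $\partial_\theta J_{h,2n+1}$ after the reindexing $m\mapsto m-1$ in the sum defining $J_{h,2n+1}$; I have checked that the exponents $2(n-m)$ and the signs match.

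In the same way, differentiating (\ref{ts.e1}) and substituting (\ref{3d.1}) for $\partial_\theta H$ produces the curl term $(-1)^{n+1}\curl{2n+2}E$ with source $\eta\,\partial_\theta J_{e,2n+1}$. This equals $\eta J_{e,2n+2}$ by direct comparison of the two sums, with no extra term appearing.

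\emph{Step from even order $2n+2$ to odd order $2n+3$.} Differentiating (\ref{ts.h2}) and substituting (\ref{3d.1}) yields $\frac{1}{\eta}(-1)^{n+2}\curl{2n+3}E+\partial_\theta J_{h,2n+2}$. The required identity is $J_{h,2n+3}=\partial_\theta J_{h,2n+2}$, which holds after the index shift $m\mapsto m+1$.

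Differentiating (\ref{ts.e2}) and substituting (\ref{3d.2}) gives $\eta(-1)^{n+1}\curl{2n+3}H+\eta(-1)^{n}\curl{2n+2}J+\eta\,\partial_\theta J_{e,2n+2}$. So here I must check
\begin{equation*}
J_{e,2n+3}=(-1)^{n+2}\curl{2n+2}J+\partial_\theta J_{e,2n+2},
\end{equation*}
where the new curl term is the $m=n+1$ term of $J_{e,2(n+1)+1}$. This closes the induction.

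The main obstacle is purely the bookkeeping of these four source-term recursions: the signs $(-1)^{m}$ versus $(-1)^{m+1}$, the off-by-one index shifts between consecutive sums, and confirming that exactly one new boundary term appears in two of the four cases and none in the other two. I would carry out each reindexing explicitly, term by term, rather than relying on pattern matching.
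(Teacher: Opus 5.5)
Your proof is correct. The base case and all four source-term recursions, $J_{h,2n+2}=(-1)^n\curl{2n+1}J+\partial_\theta J_{h,2n+1}$, $J_{e,2n+2}=\partial_\theta J_{e,2n+1}$, $J_{h,2n+3}=\partial_\theta J_{h,2n+2}$ and $J_{e,2n+3}=(-1)^n\curl{2n+2}J+\partial_\theta J_{e,2n+2}$, hold with the signs and index shifts you describe.

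This paper gives no proof of its own and defers to \cite{GE20266756962}. Your induction on the derivative order (differentiate once, commute $\partial_\theta$ past the curls, substitute the complementary Maxwell equation) is the natural argument. One point should be stated explicitly: $J$ is $C^\infty$ because $\eta J=\eta\nabla\times H-\partial_\theta E$ by (\ref{3d.2}), and this is what justifies differentiating the source terms under the curls.
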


A proof of the Time--Space Theorem is given in \cite{GE20266756962}. The theorem converts temporal derivatives of arbitrary order at a given time into spatial curl operators at the same time, thereby providing the mathematical basis for the operator-based temporal order introduced in Section \ref{sec:introduction}.

Using the Time--Space Theorem, the temporal derivatives in (\ref{taylor.H}) and (\ref{taylor.E}) can be expressed in terms of spatial curl operators. To construct numerical approximation algorithms, let the spatial step size be \(\ds\) and define the Courant number as
\begin{equation*}
	S_c=\frac{\dct}{\ds}.
\end{equation*}
Denote the time coordinates associated with \(H\) and \(E\) by \(\theta_h\) and \(\theta_e\), respectively. Substituting (\ref{ts.h1})--(\ref{ts.e2}) into (\ref{taylor.H}) and (\ref{taylor.E}) and expressing the resulting terms in terms of \(S_c\) yields the following framework formulas.
 
\begin{equation}
	\label{formula.H}
	\begin{split}
&H(\theta_h + \dct) = \sum_{n=0}^{N_2} (-1)^n S_c^{2n} \frac{\ds^{2n}}{(2n)!} \curl{2n}H(\theta_h)
\\
&+ \frac{1}{\eta} \sum_{n=0}^{N_1}(-1)^{n+1} S_c^{2n+1} \frac{\ds^{2n+1}}{(2n+1)!} \curl{2n+1} E(\theta_h)
\\
&+  \sum_{n=0}^{N_2} \frac{\dct^{2n}}{(2n)!} J_{h,2n}(\theta_h)
\\
&+ \sum_{n=0}^{N_1} \frac{\dct^{2n+1}}{(2n+1)!} J_{h,2n+1}(\theta_h) + O(\dct^{N})
	\end{split}
\end{equation}

\begin{equation}
	\label{formula.E}
	\begin{split}
		&E(\theta_e + \dct) = \sum_{n=0}^{N_2} (-1)^n S_c^{2n} \frac{\ds^{2n}}{(2n)!} \curl{2n}E(\theta_e)
		\\
		&+ \eta \sum_{n=0}^{N_1}(-1)^{n} S_c^{2n+1} \frac{\ds^{2n+1}}{(2n+1)!} \curl{2n+1} H(\theta_e)
		\\
		&+ \eta \sum_{n=0}^{N_2} \frac{\dct^{2n}}{(2n)!} J_{e,2n}(\theta_e)
		\\
		&+ \eta \sum_{n=0}^{N_1} \frac{\dct^{2n+1}}{(2n+1)!} J_{e,2n+1}(\theta_e) + O(\dct^{N})
	\end{split}
\end{equation}

where $N, N_1, $ and $N_2$ are integers, and
\begin{align*}
	N &\ge 2, \\
	N_1 &= 
	\begin{cases} 
		\frac{N}{2}-1, 	 & \text{if } N \text{ is even} \\
		\frac{N-1}{2}-1, & \text{if } N \text{ is odd} 
	\end{cases},
	\\
	N_2 &= 
	\begin{cases} 
		\frac{N}{2}-1, 	 & \text{if } N \text{ is even} \\
		\frac{N-1}{2}, & \text{if } N \text{ is odd} 
	\end{cases}	.
\end{align*}
Several properties of the framework formulas (\ref{formula.H}) and (\ref{formula.E}) are worth noting.
\begin{itemize}
	\item The framework formulas are algebraically independent of the spatial step size \(\ds\), since
	\[
	S_c^k\frac{\ds^k}{k!}
	=
	\frac{\dct^k}{k!}.
	\]
	The introduction of \(\ds\) and \(S_c\) therefore exposes the space--time structure of the formulas without changing the underlying temporal expansion.
	
	\item In the limit \(N\rightarrow\infty\), subject to convergence of the corresponding infinite expansions, the framework formulas recover the analytical solutions independently of the spatial step size introduced above. Representative closed-form solutions are given in \cite{GE2026100688,GE20266756962}. Such analytical solutions provide reference fields for direct accuracy evaluation of finite-order field approximations.
	
	\item Different field approximation algorithms can be constructed by employing different numerical estimations of the spatial operator terms
	
	\begin{equation}
	\label{terms.curls}
\frac{\ds^k}{k!}\curl{k}H
\qquad\text{and}\qquad
\frac{\ds^k}{k!}\curl{k}E.
	\end{equation}
	Thus, the framework is not tied to a particular spatial discretization or curl-estimation method.
	
	\item The Courant-number structure is preserved at successive approximation orders. As proved in the next section for the FDTD implementation, numerical estimations of the spatial operator terms (\ref{terms.curls}) are independent of the absolute spatial step size \(\ds\). Consequently, the effects of the spatial and temporal step sizes in the framework formulas are represented through their dimensionless ratio \(S_c=\dct/\ds\), and this structure is retained as the approximation order increases.
\end{itemize}

\section{Arbitrary-Order FDTD Algorithm}
\label{sec:aoFDTD}

As one application of the framework, we implement (\ref{formula.H}) and (\ref{formula.E}) using staggered temporal and spatial discretizations, leading to an arbitrary-order FDTD algorithm.

\subsection{Staggered Time Stepping}

Applying the framework formulas at the two half-time steps \(\theta_h\pm\dct/2\) and \(\theta_e\pm\dct/2\), respectively, and taking their differences eliminates the even-order terms, we obtain
\begin{equation}
	\label{fdtd1.H}
	\begin{split}
		&H\left(\theta_h+\frac{1}{2}\dct\right)
		-H\left(\theta_h-\frac{1}{2}\dct\right)
		\\
		&=\frac{1}{\eta}
		\sum_{n=0}^{N_1}
		\frac{(-1)^{n+1}}{2^{2n}}
		S_c^{2n+1}
		\frac{\ds^{2n+1}}{(2n+1)!}
		\curl{2n+1}E(\theta_h)
		\\
		&\quad+
		\sum_{n=0}^{N_1}
		\frac{\dct^{2n+1}}
		{2^{2n}(2n+1)!}
		J_{h,2n+1}(\theta_h)
		+O(\dct^N),
	\end{split}
\end{equation}
and
\begin{equation}
	\label{fdtd1.E}
	\begin{split}
		&E\left(\theta_e+\frac{1}{2}\dct\right)
		-E\left(\theta_e-\frac{1}{2}\dct\right)
		\\
		&=\eta
		\sum_{n=0}^{N_1}
		\frac{(-1)^n}{2^{2n}}
		S_c^{2n+1}
		\frac{\ds^{2n+1}}{(2n+1)!}
		\curl{2n+1}H(\theta_e)
		\\
		&\quad+
		\eta\sum_{n=0}^{N_1}
		\frac{\dct^{2n+1}}
		{2^{2n}(2n+1)!}
		J_{e,2n+1}(\theta_e)
		+O(\dct^N).
	\end{split}
\end{equation}
Because the staggered-time difference eliminates the even-order terms in the framework formulas, this implementation produces distinct approximations only at even temporal approximation orders. We therefore restrict \(N=2,4,6,\ldots\), for which
\[
N_1=\frac{N}{2}-1.
\]
The resulting approximations retain successively higher odd-order temporal derivatives, represented through the corresponding odd-order curl operators.

Yee-style staggered time stepping is then defined by
\begin{equation}
	\label{time.cth}
	\theta_h=q\dct,
\end{equation}
and
\begin{equation}
	\label{time.cte}
	\theta_e=q\dct-\frac{1}{2}\dct,
\end{equation}
where \(q=0,1,2,\ldots\).

For notational simplicity, square brackets are used to denote the discrete time-step index \(q\). From (\ref{time.cth}), define
\begin{equation}
	\label{time.H}
	H\left(\theta_h-\frac{1}{2}\dct\right)
	=
	H\left(q\dct-\frac{1}{2}\dct\right)
	=
	H[q].
\end{equation}
Similarly, from (\ref{time.cte}), define
\begin{equation}
	\label{time.E}
	E\left(\theta_e+\frac{1}{2}\dct\right)
	=
	E(q\dct)
	=
	E[q].
\end{equation}

Using (\ref{time.H}) and (\ref{time.E}), the staggered-time formulas (\ref{fdtd1.H}) and (\ref{fdtd1.E}) become
\begin{equation}
	\label{fdtd2.H}
	\begin{split}
		&H[q+1]=H[q]
		\\
		&+\frac{1}{\eta}
		\sum_{n=0}^{N_1}
		\frac{(-1)^{n+1}}{2^{2n}}
		S_c^{2n+1}\frac{\ds^{2n+1}}{(2n+1)!}\curl{2n+1}E[q]
		\\
		&+\sum_{n=0}^{N_1}
		\frac{\dct^{2n+1}}
		{2^{2n}(2n+1)!}
		J_{h,2n+1}(q\dct)
		\\
		&+O(\dct^N),
	\end{split}
\end{equation}
and
\begin{equation}
	\label{fdtd2.E}
	\begin{split}
		&E[q+1]=E[q]
		\\
		&+\eta
		\sum_{n=0}^{N_1}
		\frac{(-1)^n}{2^{2n}}
		S_c^{2n+1}\frac{\ds^{2n+1}}{(2n+1)!}
		\curl{2n+1}H[q+1]
		\\
		&+\eta
		\sum_{n=0}^{N_1}
		\frac{\dct^{2n+1}}
		{2^{2n}(2n+1)!}
		J_{e,2n+1}
		\left(q\dct+\frac{1}{2}\dct\right)
		\\
		&+O(\dct^N).
	\end{split}
\end{equation}

Although \(H[q]\) and \(E[q]\) share the same time-step index, they are separated by one half time step according to (\ref{time.H}) and (\ref{time.E}). Equations (\ref{fdtd2.H}) and (\ref{fdtd2.E}) therefore retain the conventional staggered-time update sequence: \(H[q+1]\) is first obtained from \(H[q]\) and \(E[q]\), followed by \(E[q+1]\) from \(E[q]\) and the newly obtained \(H[q+1]\). Unlike the standard second-order Yee update, however, each update incorporates successively higher odd-order curl operators as the temporal approximation order increases.

\subsection{Staggered Space Stepping}

Denote the field components by
\begin{equation*}
	H=[H_x,H_y,H_z]^T,
	\qquad
	E=[E_x,E_y,E_z]^T.
\end{equation*}
Yee-style spatial staggering arranges the field components as
\begin{equation}
	\label{space.H}
	H=
	\begin{bmatrix}
		H_x(x,y+\frac{1}{2}\ds,z+\frac{1}{2}\ds)\\
		H_y(x+\frac{1}{2}\ds,y,z+\frac{1}{2}\ds)\\
		H_z(x+\frac{1}{2}\ds,y+\frac{1}{2}\ds,z)
	\end{bmatrix},
\end{equation}
and
\begin{equation}
	\label{space.E}
	E=
	\begin{bmatrix}
		E_x(x+\frac{1}{2}\ds,y,z)\\
		E_y(x,y+\frac{1}{2}\ds,z)\\
		E_z(x,y,z+\frac{1}{2}\ds)
	\end{bmatrix}.
\end{equation}

The update formulas (\ref{fdtd2.H}) and (\ref{fdtd2.E}) remain compatible with the staggered grid (\ref{space.H}) and (\ref{space.E}) at arbitrary orders. On the Yee grid, application of the curl operator alternates the field locations: a curl of \(E\) is evaluated at \(H\)-locations, while a curl of \(H\) is evaluated at \(E\)-locations. Consequently, every odd-order curl of \(E\) appearing in (\ref{fdtd2.H}) is located at the corresponding \(H\)-locations, and every odd-order curl of \(H\) appearing in (\ref{fdtd2.E}) is located at the corresponding \(E\)-locations.

\subsection{Curl Estimations}
\label{sec:curls}

To estimate the spatial operator terms
\[
\frac{\ds^k}{k!}\curl{k}H
\qquad\text{and}\qquad
\frac{\ds^k}{k!}\curl{k}E
\]
in (\ref{fdtd2.H}) and (\ref{fdtd2.E}), we first establish the following result for one-dimensional derivatives.

\begin{thm}
	\label{thm.oddorderEst}
	Let \(f(s)\) be a sufficiently smooth scalar function and \(M>0\) be an integer. The scaled odd-order derivatives
	\[
	\frac{\ds^{2h+1}}{(2h+1)!}
	\frac{\partial^{2h+1}f(s)}{\partial s^{2h+1}},
	\qquad
	h=0,1,\ldots,M-1,
	\]
	can be simultaneously approximated, with an \(O(\ds^{2M})\) remainder, by constant linear combinations of \(M\) symmetric differences of nearby function values. The coefficients of these linear combinations are independent of \(\ds\).
\end{thm}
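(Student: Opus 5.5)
The plan is to Taylor-expand symmetric differences and invert a fixed Vandermonde-type matrix. For $k=1,\ldots,M$, define the symmetric differences
\[
d_k(s)=f\!\left(s+\tfrac{2k-1}{2}\ds\right)-f\!\left(s-\tfrac{2k-1}{2}\ds\right).
\]
Half-integer offsets match the Yee staggering; integer offsets $k\ds$ would work equally well. Expanding both terms about $s$, the even-order terms cancel, and we obtain
\[
d_k(s)=\sum_{h=0}^{M-1}2\left(\tfrac{2k-1}{2}\right)^{2h+1} u_h(s)+O(\ds^{2M+1}),
\qquad
u_h(s)=\frac{\ds^{2h+1}}{(2h+1)!}\frac{\partial^{2h+1}f(s)}{\partial s^{2h+1}}.
\]
This uses the smoothness of $f$: a $C^{2M+1}$ bound on a neighborhood suffices for the Lagrange remainder. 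In the paper's notation this is the linear system $\boldsymbol{d}=\boldsymbol{A}\boldsymbol{u}+O(\ds^{2M+1})$, with
\[
\boldsymbol{A}(i,j)_{M\times M}=\left[2\left(\tfrac{2i+1}{2}\right)^{2j+1}\right]_{M\times M}.
\]
The key observation is that the entries of $\boldsymbol{A}$ are pure numbers, independent of $\ds$, because each factor $\ds^{2h+1}/(2h+1)!$ has been absorbed into the unknown $u_h$.

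The main step is invertibility of $\boldsymbol{A}$. Write $a_i=\left(\tfrac{2i+1}{2}\right)^2$. Then
\[
\boldsymbol{A}=\mathrm{diag}(2\sqrt{a_i})\,\boldsymbol{V},
\qquad
\boldsymbol{V}(i,j)=a_i^{\,j},
\]
where $\boldsymbol{V}$ is a Vandermonde matrix in the nodes $a_i$. These nodes are distinct and positive, so $\det\boldsymbol{V}=\prod_{i<i'}(a_{i'}-a_i)\neq0$, and the diagonal factor is nonsingular. Hence $\boldsymbol{A}^{-1}$ exists, and its entries are again constants independent of $\ds$.

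Setting $\boldsymbol{u}=\boldsymbol{A}^{-1}\boldsymbol{d}+\boldsymbol{A}^{-1}O(\ds^{2M+1})$ gives each $u_h$, for all $h=0,\ldots,M-1$ simultaneously, as a constant linear combination of the $M$ differences $d_k$. The error is a fixed finite combination of $O(\ds^{2M+1})$ terms, hence $O(\ds^{2M+1})\subset O(\ds^{2M})$, as claimed.

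The step needing the most care is the remainder bookkeeping. One has to confirm that truncating after the $h=M-1$ term leaves only terms of order at least $2M+1$; this holds because the even-order terms cancel identically. One also has to confirm that the constant matrix $\boldsymbol{A}^{-1}$ does not inflate the order, which is true since $\boldsymbol{A}^{-1}$ is fixed and independent of $\ds$. The remaining bound, $O(\ds^{2M})$, is just the weaker statement the theorem asks for.
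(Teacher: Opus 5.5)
Your proposal is correct and follows essentially the same route as the paper: the same half-integer symmetric differences, the same $\Delta s$-independent matrix from the Taylor expansion, and the same diagonal-times-Vandermonde argument for invertibility. Your remainder bookkeeping is slightly sharper than the paper's, because the first omitted term is $O(\Delta s^{2M+1})$, and this implies the stated $O(\Delta s^{2M})$.
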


\begin{proof}
	Define the \(M\)-dimensional vector of symmetric differences
	\begin{equation*}
		\boldsymbol{d}(r)_{M \times 1} = [f(s+(r+\frac{1}{2})\ds) - f(s-(r+\frac{1}{2})\ds)]_{M \times 1}.
	\end{equation*}
	where \(r=0,1,\ldots,M-1\). Collect the scaled odd-order derivatives into the vector
	\begin{equation*}
		\boldsymbol{u}(h)_{M\times1}
		=
		\left[
		\frac{\ds^{2h+1}}{(2h+1)!}
		\frac{\partial^{2h+1}f(s)}
		{\partial s^{2h+1}}
		\right]_{M\times1},
	\end{equation*}
	where \(h=0,1,\ldots,M-1\). Expanding the symmetric differences by Taylor series gives
	\begin{equation*}
		\boldsymbol{d}
		=
		2\boldsymbol{A}\boldsymbol{u}
		+O(\ds^{2M}),
	\end{equation*}
	where
	\begin{equation*}
		\boldsymbol{A}(r,h)_{M\times M}
		=
		\left[
		\left(\frac{2r+1}{2}\right)^{2h+1}
		\right]_{M\times M}.
	\end{equation*}
	Since \(A_{rh}=a_r(a_r^2)^h\), where \(a_r=r+1/2\) are distinct and nonzero, \(\boldsymbol A\) is the product of a nonsingular diagonal matrix and a Vandermonde matrix with distinct nodes \(a_r^2\), and is therefore nonsingular.
	
	Hence,
	\begin{equation*}
		\boldsymbol{u}
		=
		\frac{1}{2}\boldsymbol{A}^{-1}\boldsymbol{d}
		+O(\ds^{2M}).
	\end{equation*}
	Since \(\boldsymbol{A}\) depends only on the integer indices \(r\) and \(h\), both \(\boldsymbol{A}\) and \(\boldsymbol{A}^{-1}\) are independent of \(\ds\). Therefore, each scaled odd-order derivative is approximated by a constant linear combination of the nearby function values, with coefficients independent of \(\ds\).
\end{proof}

\subsection{Notes on Implementations}

Theorem \ref{thm.oddorderEst} can be applied directly to estimate first-order curls. For \(N=2\), only first-order curls are required, and (\ref{fdtd2.H}) and (\ref{fdtd2.E}) reduce to the standard Yee algorithm. Higher-order implementations require additional considerations, as outlined below.
\begin{itemize}
\item \textbf{Spatial order.} According to Theorem \ref{thm.oddorderEst}, a temporal approximation order \(N\) requires a spatial estimation order \(2M\) satisfying
\[
2M\geq N.
\]
Thus, \(2M = N\) is the minimum spatial order required to fully realize the specified temporal order. Higher spatial orders, \(2M>N\), may also be employed. Determining whether and under what conditions such higher spatial orders provide additional accuracy or other numerical benefits is an interesting topic for further investigation.

\item \textbf{Recursive curl estimations.} Theorem \ref{thm.oddorderEst} provides one-dimensional derivative estimations and therefore does not directly account for mixed partial derivatives, such as \(\partial^2/(\partial x\,\partial y)\), that arise in higher-order curls. Such curl terms can instead be constructed recursively by repeated application of the theorem.

\item \textbf{Boundary-layer requirement.} Higher-order derivative estimations employ wider spatial stencils and consequently reduce the region in which the curls can be estimated entirely from interior field values. For a \(2M\)-th order estimation, Theorem \ref{thm.oddorderEst} requires \(M\) neighboring grid layers. Each recursive application therefore reduces the region of valid curl estimations by \(M\) layers. Since the construction requires \(M\) recursive applications, the valid interior region is reduced by \(M^2\) grid layers. A boundary layer of at least \(M^2\) cells is therefore required to isolate the interior region from inaccurate curl estimations near the computational boundary.	
\end{itemize}

\section{Example: An Initial-Value Problem}
\label{sec:exp.ini}

For this example, the source is set to zero and the following initial fields are used:
\begin{align}
	\label{3d.exa.ini.H}
	F_h(x,y,z)&=0,
	\\
	\label{3d.exa.ini.E}
	F_e(x,y,z)&=\exp(-r^2)[yz,-2zx,xy]^T,
\end{align}
where $r^2=x^2+y^2+z^2$.

Since the physical initial conditions are specified at \(\theta=0\), an initialization step is required for the staggered-time representation. According to (\ref{time.H}), \(H[0]\) represents \(H(-\dct/2)\); therefore, directly setting \(H[0]=H(0)=0\) would shift the prescribed initial condition by one-half time step, an initialization issue inherited from the conventional Yee time staggering. The general framework formula (\ref{formula.H}) provides a way to address this problem by approximating \(H(-\dct/2)\) from the fields prescribed at \(\theta=0\). For the present example, since \(H(0)=0\) and \(J=0\), all even-order curl and source terms vanish, and the general framework formula (\ref{formula.H}) becomes one half of the \(H\)-update in (\ref{fdtd2.H}), with a minus sign for backward time stepping. Thus, \(H[0]\) can be initialized using the same arbitrary-order curl-estimation procedure as the subsequent FDTD updates.

Five simulations are performed using temporal orders
\[
N=2,4,6,8,10,
\]
respectively, with the spatial estimation order chosen as \(2M=N\). The same spatial and temporal step sizes are used in all five simulations:
\begin{align*}
	\ds&=0.4,\\
	\dct&=0.23.
\end{align*}
Thus, differences among the simulations result from the approximation order rather than from mesh refinement.

Electromagnetic field snapshots are recorded over 26 time steps within a central \(8\)-m cube defined by
\begin{equation}
	\label{num.domain}
	\begin{split}
		x&=i\ds,\qquad y=j\ds,\qquad z=k\ds,\\
		i,j,k&=0,\pm1,\pm2,\ldots,\pm10,\\
		q&=0,1,2,\ldots,25.
	\end{split}
\end{equation}
The corresponding times of \(H[q]\) and \(E[q]\) are given by (\ref{time.H}) and (\ref{time.E}), respectively. For each simulation, the computational domain is chosen sufficiently large that boundary effects do not reach the central \(8\)-m cube during the recorded time interval.

A closed-form analytical solution for this problem is available in \cite{GE20266756962}, allowing the approximation errors to be evaluated directly at the staggered locations of the FDTD field components. At each grid index \((i,j,k)\), denote the FDTD output by
\begin{equation*}
	E_{\mathrm{fdtd}}
	=
	[E_{x,\mathrm{fdtd}},E_{y,\mathrm{fdtd}},E_{z,\mathrm{fdtd}}]^T,
\end{equation*}
where the three components are located at their respective staggered-grid positions defined by (\ref{space.E}). The analytical field components are evaluated individually at these same staggered locations and collected as
\begin{equation*}
	E_{\mathrm{analytic}}
	=
	[E_{x,\mathrm{analytic}},E_{y,\mathrm{analytic}},E_{z,\mathrm{analytic}}]^T.
\end{equation*}
Although these quantities are written in vector form for convenience, their components correspond to different spatial locations on the staggered grid.

Define the pointwise field error by
\begin{equation}
	\label{err.point}
	e_E(i,j,k,q)
	=
	\left\|
	E_{\mathrm{analytic}}-E_{\mathrm{fdtd}}
	\right\|_2,
\end{equation}
where
\begin{equation*}
	\left\|
	E_{\mathrm{analytic}}-E_{\mathrm{fdtd}}
	\right\|_2
	=
	\sqrt{
		(\Delta E_x)^2+
		(\Delta E_y)^2+
		(\Delta E_z)^2
	},
\end{equation*}
and \(\Delta E_\alpha=E_{\alpha,\mathrm{analytic}}-E_{\alpha,\mathrm{fdtd}}\), \(\alpha=x,y,z\).

The following two error statistics are then calculated over the domain defined by (\ref{num.domain}):
\begin{equation}
	Err_{\max}(q)
	=
	\max_{i,j,k} e_E(i,j,k,q),
\end{equation}
and
\begin{equation}
	Err_{\mathrm{sum}}(q)
	=
	\sum_{i,j,k}e_E(i,j,k,q).
\end{equation}

Fig.~\ref{fig:errMax} compares \(Err_{\max}(q)\) obtained from the five simulations with temporal orders \(N=2,4,6,8,\) and \(10\), while Fig.~\ref{fig:errSum} compares the corresponding \(Err_{\mathrm{sum}}(q)\). For this example, both error measures decrease rapidly toward zero as the approximation order increases, while the spatial and temporal step sizes remain unchanged.

For comparison, the mesh-refinement study of the same initial-value problem in \cite{GE20266756962} showed that, under the same configurations considered here, the simulation errors approached nonzero limiting values as the mesh was refined. The present results demonstrate that increasing the approximation order provides an additional means of reducing errors beyond those obtained through mesh refinement alone.

To make the error reduction at higher orders more visible, Fig.~\ref{fig:errMaxLog} and Fig.~\ref{fig:errSumLog} show \(\ln[Err_{\max}(q)]\) and \(\ln[Err_{\mathrm{sum}}(q)]\), respectively. At the initial time step \(q=0\), the errors are zero and their logarithms are therefore undefined, with the limiting value \(\ln(0)=-\infty\). For plotting purposes, these initial values are represented by values lower than the minimum finite error level shown in each figure. The logarithmic plots clearly show that, at each recorded time step after initialization, both error measures decrease monotonically as the approximation order increases.

\section{Conclusion}
\label{sec:conclusion}
For standard FDTD, continued mesh refinement may eventually provide diminishing improvements in accuracy \cite{fdtd.cheng.2011,GE20266756962}. In contrast, the arbitrary-order framework developed in this paper provides another route to increasing accuracy while keeping the spatial and temporal step sizes fixed. Subject to convergence, the framework approaches the analytical solution as the approximation order tends to infinity. More importantly for practical computation, the numerical results presented here show that the simulation errors decrease rapidly with increasing order and become small at moderate finite orders. Thus, approaching the analytical solution does not necessarily require an impractically high approximation order.

An important topic for future work is the tradeoff between accuracy and computational cost. The computational cost depends strongly on the particular implementation of the framework formulas. For example, by applying (\ref{formula.E}) symmetrically about \(\theta_e\), we obtain
\begin{equation*}
	\begin{split}
		&E(\theta_e+\dct)+E(\theta_e-\dct)
		\\
		&=2\sum_{n=0}^{N_2}(-1)^n S_c^{2n}
		\frac{\ds^{2n}}{(2n)!}
		\curl{2n}E(\theta_e)
		\\
		&\quad+2\eta\sum_{n=0}^{N_2}
		\frac{\dct^{2n}}{(2n)!}
		J_{e,2n}(\theta_e)
		+O(\dct^N).
	\end{split}
\end{equation*}
This implementation provides an arbitrary-order approximation of \(E\) involving only \(E\) and the source terms, without requiring \(H\). For applications in which the magnetic field is not required, such a formulation can avoid the computation and storage associated with \(H\), potentially reducing the computational cost substantially.

Although the framework supports the conventional Yee staggered grid, it also permits implementations on non-staggered grids. Such implementations may be useful for several reasons.
\begin{itemize}
\item \textbf{Collocated field vectors.} On a non-staggered grid, the components of the electromagnetic fields are evaluated at common spatial locations. Vector quantities such as the Poynting vector can therefore be constructed directly, without the interpolation or averaging required to collocate field components on a Yee grid.
\item \textbf{Boundary treatments.} Non-staggered grids can simplify the implementation of certain boundary treatments. In our implementations, PEC boundary conditions and PML layers have been applied straightforwardly on non-staggered grids with satisfactory numerical results, although a detailed study of these implementations is outside the scope of the present paper.
\item \textbf{Higher-order spatial stencils.} Staggering provides its greatest relative reduction in sampling distance for immediately neighboring field components. As increasingly wide stencils are employed, this relative geometric advantage decreases. It is therefore of interest to investigate the relative accuracy and computational characteristics of staggered and non-staggered grids for higher-order implementations.
\end{itemize}

\begin{figure}[!t]
	\centerline{\includegraphics[width=\columnwidth]{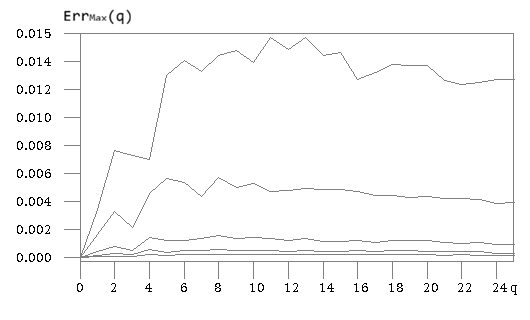}}
	\caption{Maximum error at each time step, \(Err_{\max}(q)\). The curves correspond, from top to bottom, to temporal approximation orders \(N=2,4,6,8,\) and \(10\), respectively.}
	\label{fig:errMax}
\end{figure}
\begin{figure}[!t]
	\centerline{\includegraphics[width=\columnwidth]{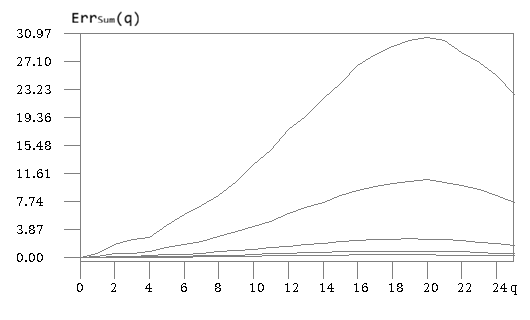}}
	\caption{Summed error at each time step, \(Err_{\mathrm{sum}}(q)\). The curves correspond, from top to bottom, to temporal approximation orders \(N=2,4,6,8,\) and \(10\), respectively.}
	\label{fig:errSum}
\end{figure}

\begin{figure}[!t]
	\centerline{\includegraphics[width=\columnwidth]{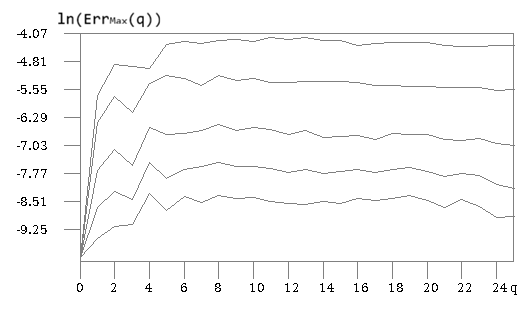}}
	\caption{Natural logarithm of the maximum error at each time step, \(\ln(Err_{\max}(q))\). The curves correspond, from top to bottom, to temporal approximation orders \(N=2,4,6,8,\) and \(10\), respectively.}
	\label{fig:errMaxLog}
\end{figure}
\begin{figure}[!t]
	\centerline{\includegraphics[width=\columnwidth]{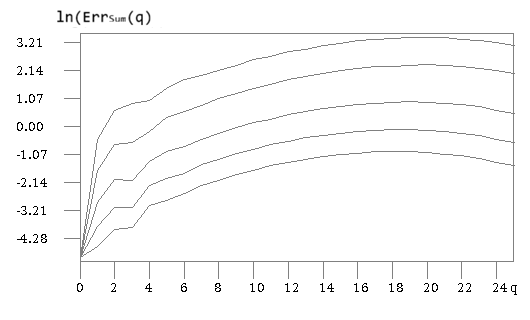}}
	\caption{Natural logarithm of the summed error at each time step, \(\ln(Err_{\mathrm{sum}}(q))\). The curves correspond, from top to bottom, to temporal approximation orders \(N=2,4,6,8,\) and \(10\), respectively.}
	\label{fig:errSumLog}
\end{figure}

\section*{Acknowledgment}

During the preparation of this manuscript, the author used ChatGPT to assist with language refinement and critical discussion of mathematical exposition. After using this tool, the author carefully reviewed and edited all content and takes full responsibility for the published manuscript.

\end{document}